\newtheorem{theorem}{Theorem}[section]
\newtheorem{lemma}{Lemma}[section]
\newtheorem{remark}{Remark}[section]
\newtheorem{algorithm}{Algorithm}[section]
\begin{document}
\title{A Full Multigrid Method for Eigenvalue Problems\footnote{This work is supported in part
by the National Science Foundation of China (NSFC 91330202, 11371026,
11001259, 11031006, 2011CB309703),  the National
Center for Mathematics and Interdisciplinary Science, CAS and the
President Foundation of AMSS-CAS.}}
\author{Hehu Xie\footnote{LSEC, NCMIS, Institute
of Computational Mathematics, Academy of Mathematics and Systems
Science, Chinese Academy of Sciences, Beijing 100190,
China(hhxie@lsec.cc.ac.cn)} }
\date{}
\maketitle
\begin{abstract}
In this paper, a full (nested) multigrid scheme is proposed
to solve eigenvalue problems. The idea here is to use the multilevel correction method
to transform the solution of eigenvalue problem to a series of solutions of the corresponding
boundary value problems and eigenvalue problems defined on the coarsest finite element space.
The boundary value problems which are define on a sequence of multilevel finite element space
can be solved by some multigrid iteration steps.
Besides the multigrid iteration, all other efficient iteration methods for solving boundary value
problems can serve as linear problem solver. The computational work of this new scheme can reach
optimal order the same as solving the corresponding source problem.
Therefore, this type of iteration scheme improves the efficiency of eigenvalue problem solving.

\vskip0.3cm {\bf Keywords.} Eigenvalue problem, full multigrid method, multilevel correction,
finite element method.

\vskip0.2cm {\bf AMS subject classifications.} 65N30, 65N25, 65L15,
65B99.
\end{abstract}

\section{Introduction}

It is well known there have existed many efficient algorithms, such as multigrid method
 and many other precondition techniques \cite{BrennerScott,Shaidurov,Xu},
 for solving boundary value problems. 
The error bounds of the approximate solution
obtained from these efficient numerical algorithms are comparable to the theoretical
bounds determined by the finite element discretization.  But the amount of computational
work involved is only proportional to the number of unknowns
in the discretized equations. For more details of the multigrid and multilevel methods,
please refer to \cite{BankDupont,Bramble,BramblePasciak,BrambleZhang,BrennerScott,
Hackbusch,Hackbusch_Book,McCormick,ScottZhang,Shaidurov,Xu,Xu_Two_Grid} and the references cited therein.

But there is no many efficient numerical methods for solving eigenvalue problems with optimal
complexity. Solving large scale eigenvalue problems is one of  fundamental problems in modern science
and engineering society. However, it is always a very difficult task to solve high-dimensional
eigenvalue problems which come from physical and chemistry sciences.
Recently, a type of multilevel correction method is proposed for solving
eigenvalue problems in \cite{LinXie_MultiLevel,Xie_IMA,Xie_JCP}. In this multilevel
correction scheme, the solution of eigenvalue problem on the final level
mesh can be reduced to a series of solutions of boundary value problems on
the multilevel meshes and a series of solutions of the eigenvalue problem
 on the coarsest mesh. The multilevel correction method gives a way to construct
the multigrid method for eigenvalue problems \cite{Xie_IMA,Xie_JCP}.

The aim of this paper is to present a full multigrid method
for solving eigenvalue problems based on the combination of the multilevel correction
method \cite{Xie_IMA,Xie_JCP} and the multigrid iteration for
boundary value problems. Comparing with the method in \cite{LinXie_MultiLevel,Xie_IMA,Xie_JCP},
the difference is that we do not solve the linear boundary value problem
exactly in each correction step with the multigrid method. We only
get an approximate solution with some multigrid iteration steps. In this new version of
multigrid method,  solving eigenvalue
problem will not be much more difficult than the multigrid scheme for the
corresponding boundary value problems.
It is worth to noting that besides the multigrid method here,
other types of numerical algorithms such as BPX multilevel preconditioners \cite{Xu},
algebraic multigrid method and domain decomposition
preconditioners (cf. \cite{BrennerScott,ToselliWidlund})
can also act as the linear algebraic solvers for boundary value problems.


An outline of the paper goes as follows. In Section 2, we introduce the
finite element method for eigenvalue problem and the corresponding basic
error estimates. A type of full multigrid
algorithm for solving eigenvalue problem by finite element method
 is given in Section 3.
Two numerical examples are presented to validate our
theoretical analysis in section 4.
Some concluding remarks are given in the last section.

\section{Finite element method for eigenvalue problem}\label{sec;preliminary}
This section is devoted to introducing some notation and the finite element
method for eigenvalue problem. In this paper, we shall use the standard notation
for Sobolev spaces $W^{s,p}(\Omega)$ and their
associated norms and semi-norms (cf. \cite{Adams}). For $p=2$, we denote
$H^s(\Omega)=W^{s,2}(\Omega)$ and
$H_0^1(\Omega)=\{v\in H^1(\Omega):\ v|_{\partial\Omega}=0\}$,
where $v|_{\Omega}=0$ is in the sense of trace,
$\|\cdot\|_{s,\Omega}=\|\cdot\|_{s,2,\Omega}$.
The letter $C$ (with or without subscripts) denotes a generic
positive constant which may be different at its different occurrences
 through the paper.

For simplicity, we consider the following model problem to illustrate the main idea:
Find $(\lambda, u)$ such that
\begin{equation}\label{LaplaceEigenProblem}
\left\{
\begin{array}{rcl}
-\nabla\cdot(\mathcal{A}\nabla u)+\phi u&=&\lambda u, \quad {\rm in} \  \Omega,\\
u&=&0, \ \  \quad {\rm on}\  \partial\Omega,
\end{array}
\right.
\end{equation}
where $\mathcal{A}$ is a symmetric and positive definite matrix with suitable
regularity, $\phi$ is a nonnegative function,  $\Omega\subset\mathcal{R}^d$ $(d=2,3)$
 is a bounded domain with
Lipschitz boundary $\partial\Omega$ and $\nabla$, $\nabla\cdot$
denote the gradient, divergence operators, respectively.

In order to use the finite element method to solve
the eigenvalue problem (\ref{LaplaceEigenProblem}), we need to define
the corresponding variational form as follows:
Find $(\lambda, u )\in \mathcal{R}\times V$ such that $b(u,u)=1$ and
\begin{eqnarray}\label{weak_eigenvalue_problem}
a(u,v)&=&\lambda b(u,v),\quad \forall v\in V,
\end{eqnarray}
where $V:=H_0^1(\Omega)$ and
\begin{equation}\label{inner_product_a_b}
a(u,v)=\int_{\Omega}\big(\nabla u\cdot\mathcal{A}\nabla v +\phi uv\big)d\Omega,
 \ \ \ \  \ \ b(u,v) = \int_{\Omega}uv d\Omega.
\end{equation}
The norms $\|\cdot\|_a$ and $\|\cdot\|_b$ are defined by
\begin{eqnarray*}
\|v\|_a=a(v,v)^{1/2}\ \ \ \ \ {\rm and}\ \ \ \ \ \|v\|_b=b(v,v)^{1/2}.
\end{eqnarray*}
  It is well known that the eigenvalue problem (\ref{weak_eigenvalue_problem})
  has an eigenvalue sequence $\{\lambda_j \}$ (cf. \cite{BabuskaOsborn_Book,Chatelin}):
$$0<\lambda_1\leq \lambda_2\leq\cdots\leq\lambda_k\leq\cdots,\ \ \
\lim_{k\rightarrow\infty}\lambda_k=\infty,$$ and associated
eigenfunctions
$$u_1, u_2, \cdots, u_k, \cdots,$$
where $b(u_i,u_j)=\delta_{ij}$ ($\delta_{ij}$ denotes the Kronecker function).
In the sequence $\{\lambda_j\}$, the $\lambda_j$ are repeated according to their
geometric multiplicity.

Now, let us define the finite element approximations of the problem
(\ref{weak_eigenvalue_problem}). First we generate a shape-regular
decomposition of the computing domain $\Omega\subset \mathcal{R}^d\
(d=2,3)$ into triangles or rectangles for $d=2$ (tetrahedrons or
hexahedrons for $d=3$) (cf. \cite{BrennerScott,Ciarlet}).
The diameter of a cell $K\in\mathcal{T}_h$ is denoted by $h_K$ and
the mesh size $h$ describes  the maximum diameter of all cells
$K\in\mathcal{T}_h$. Based on the mesh $\mathcal{T}_h$, we can
construct a finite element space denoted by $V_h \subset V$.
For simplicity, we set $V_h$ as the linear finite element space
which is defined as follows
\begin{equation}\label{linear_fe_space}
V_h = \big\{ v_h \in C(\Omega)\ \big|\ v_h|_{K} \in \mathcal{P}_1,
\ \ \forall K \in \mathcal{T}_h\big\},
\end{equation}
where $\mathcal{P}_1$ denotes the linear function space.



The standard finite element scheme for eigenvalue
 problem (\ref{weak_eigenvalue_problem}) is:
Find $(\bar{\lambda}_h, \bar{u}_h)\in \mathcal{R}\times V_h$
such that $b(\bar{u}_h,\bar{u}_h)=1$ and
\begin{eqnarray}\label{Weak_Eigenvalue_Discrete}
a(\bar{u}_h,v_h)
&=&\bar{\lambda}_h b(\bar{u}_h,v_h),\quad\ \  \ \forall v_h\in V_h.
\end{eqnarray}
From \cite{BabuskaOsborn_1989,BabuskaOsborn_Book,Chatelin}, 
the discrete eigenvalue problem (\ref{Weak_Eigenvalue_Discrete}) has eigenvalues:
$$0<\bar{\lambda}_{1,h}\leq \bar{\lambda}_{2,h}\leq\cdots\leq \bar{\lambda}_{k,h}
\leq\cdots\leq \bar{\lambda}_{N_h,h},$$
and corresponding eigenfunctions
$$\bar{u}_{1,h}, \bar{u}_{2,h},\cdots, \bar{u}_{k,h}, \cdots, \bar{u}_{N_h,h},$$
where $b(\bar{u}_{i,h},\bar{u}_{j,h})=\delta_{ij}, 1\leq i,j\leq N_h$ ($N_h$ is
the dimension of the finite element space $V_h$).

Let $M(\lambda_i)$ denote the eigenspace corresponding to the
eigenvalue $\lambda_i$ which is defined by
\begin{eqnarray}
M(\lambda_i)&=&\big\{w\in H_0^1(\Omega): w\ {\rm is\ an\ eigenvalue\ of\
(\ref{weak_eigenvalue_problem})} \nonumber\\
&&\ \ \ \ \ \ \ \ \ \ \ \ \  {\rm corresponding\ to}\ \lambda_i\  {\rm and}\ b(w,w)=1\big\},
\end{eqnarray}
and define
\begin{eqnarray}
\delta_h(\lambda_i)=\sup_{w\in M(\lambda_i)}\inf_{v_h\in
V_h}\|w-v_h\|_{a}.
\end{eqnarray}

Let us define the following quantity:
\begin{eqnarray}
\eta_{a}(h)&=&\sup_{f\in L^2(\Omega),\|f\|_b=1}\inf_{v_h\in V_h}\|Tf-v_h\|_{a},\label{eta_a_h_Def}
\end{eqnarray}
where $T:L^2(\Omega)\rightarrow V$ is defined as
\begin{equation}\label{laplace_source_operator}
  a(Tf,v) = b(f,v), \ \ \ \ \  \forall f \in L^2(\Omega) \ \ \  {\rm and}\  \ \ \forall v\in V.
\end{equation}
Then the error estimates for the eigenpair approximations by the finite
element method can be described as follows.
\begin{lemma}(\cite[Lemma 3.6, Theorem 4.4]{BabuskaOsborn_1989} and \cite{Chatelin})
\label{Err_Eigen_Global_Lem}
For any eigenpair approximation
$(\bar{\lambda}_{i,h},\bar{u}_{i,h})$ $(i = 1, 2, \cdots, N_h)$ of
(\ref{Weak_Eigenvalue_Discrete}),
 there exists an exact eigenpair  $(\lambda_i, u_i)$ of
(\ref{weak_eigenvalue_problem})
such that $b(u_i, u_i) = 1$ and
\begin{eqnarray}
\|u_i-\bar{u}_{i,h}\|_{a}
&\leq& \big(1+C_i\eta_a(h)\big)\delta_h(\lambda_i),\label{Err_Eigenfunction_Global_1_Norm} \\
\|u_i-\bar{u}_{i,h}\|_{b}
&\leq& C_i\eta_{a}(h)\|u_i - u_{i,h}\|_{a},\label{Err_Eigenfunction_Global_0_Norm}\\
|\lambda_i-\bar{\lambda}_{i,h}|
&\leq&C_i\|u_i-\bar{u}_{i,h}\|_{a}^2. \label{Estimate_Eigenvalue}
\end{eqnarray}
Here and hereafter $C_i$ is some constant depending on $i$ but independent of  the mesh size $h$.
\end{lemma}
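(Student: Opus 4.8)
The plan is to recast the eigenvalue problem in terms of the solution operator $T$ from (\ref{laplace_source_operator}) and its discrete counterpart, and then to invoke the spectral approximation theory for compact self-adjoint operators together with an Aubin--Nitsche duality argument. First I would introduce the discrete solution operator $T_h:L^2(\Omega)\to V_h$ defined by $a(T_hf,v_h)=b(f,v_h)$ for all $v_h\in V_h$, and the Ritz (elliptic) projection $P_h:V\to V_h$ defined by $a(P_hw,v_h)=a(w,v_h)$ for all $v_h\in V_h$. A direct computation gives $T_h=P_hT$, while the continuous and discrete eigenpairs of (\ref{weak_eigenvalue_problem}) and (\ref{Weak_Eigenvalue_Discrete}) correspond exactly to the eigenpairs $(1/\lambda_i,u_i)$ of $T$ and $(1/\bar{\lambda}_{i,h},\bar{u}_{i,h})$ of $T_h$. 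The crucial operator bound then follows from the best-approximation property of $P_h$ in the $a$-norm: for any $f$ with $\|f\|_b=1$ we have $\|(T-T_h)f\|_a=\|(I-P_h)Tf\|_a=\inf_{v_h\in V_h}\|Tf-v_h\|_a\le\eta_a(h)$, so that $T_h\to T$ in the relevant operator norm at the rate $\eta_a(h)$.

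For the energy estimate (\ref{Err_Eigenfunction_Global_1_Norm}), I would employ the spectral projection $E_{i,h}$ onto the discrete invariant subspace associated with the eigenvalues of $T_h$ clustering near $1/\lambda_i$. Writing $u_i-\bar{u}_{i,h}=(u_i-P_hu_i)+(P_hu_i-\bar{u}_{i,h})$, the first term is controlled directly by the best-approximation quantity, $\|u_i-P_hu_i\|_a\le\delta_h(\lambda_i)$ by definition of $\delta_h(\lambda_i)$; the second term measures the distance between the Ritz projection and the discrete eigenspace and is shown to be of higher order, $\|P_hu_i-\bar{u}_{i,h}\|_a\le C_i\eta_a(h)\,\delta_h(\lambda_i)$, using the spectral gap together with the operator bound above. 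Combining the two contributions produces the prefactor $(1+C_i\eta_a(h))$.

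For the $L^2$-type estimate (\ref{Err_Eigenfunction_Global_0_Norm}), set $e=u_i-\bar{u}_{i,h}$ and solve the dual problem $w=Te$, so that $\|e\|_b^2=b(e,e)=a(e,w)$. Splitting $a(e,w)=a(e,w-P_hw)+a(e,P_hw)$ and bounding the first summand by $\|e\|_a\,\|(I-P_h)Te\|_a\le\eta_a(h)\,\|e\|_a\,\|e\|_b$ yields precisely the gain of one factor $\eta_a(h)$; the remaining term is absorbed through the two eigenvalue relations satisfied by $u_i$ and $\bar{u}_{i,h}$. Finally, for the eigenvalue estimate (\ref{Estimate_Eigenvalue}) I would use the Rayleigh-quotient identity, valid because $b(u_i,u_i)=b(\bar{u}_{i,h},\bar{u}_{i,h})=1$ and $a(u_i,v)=\lambda_i b(u_i,v)$ for all $v\in V$,
\begin{eqnarray*}
\bar{\lambda}_{i,h}-\lambda_i&=&\|u_i-\bar{u}_{i,h}\|_a^2-\lambda_i\|u_i-\bar{u}_{i,h}\|_b^2 .
\end{eqnarray*}
Since (\ref{Err_Eigenfunction_Global_0_Norm}) shows the second term on the right is of higher order, the dominant contribution is $\|u_i-\bar{u}_{i,h}\|_a^2$, which gives (\ref{Estimate_Eigenvalue}).

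The main obstacle I anticipate is the eigenfunction analysis when $\lambda_i$ is a multiple or tightly clustered eigenvalue: one cannot follow a single discrete eigenvector but must instead work with the spectral projection $E_{i,h}$ and a spectral-gap (separation) hypothesis, and one must track the constants carefully enough to recover the sharp prefactor $(1+C_i\eta_a(h))$ rather than a generic $C_i$. The duality step for (\ref{Err_Eigenfunction_Global_0_Norm}) is the second delicate point, because $\bar{u}_{i,h}$ is not exactly the Ritz projection of $u_i$, so $a(e,v_h)$ does not vanish for $v_h\in V_h$; the cross term $a(e,P_hw)=\lambda_i b(u_i,P_hw)-\bar{\lambda}_{i,h}b(\bar{u}_{i,h},P_hw)$ must then be estimated via the two eigenvalue equations rather than by plain Galerkin orthogonality.
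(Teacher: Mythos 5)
The paper gives no proof of this lemma at all---it is quoted directly from \cite[Lemma 3.6, Theorem 4.4]{BabuskaOsborn_1989} and \cite{Chatelin}---and your proposal correctly reconstructs precisely the argument of those sources: the identification $T_h=P_hT$ with operator-norm convergence at rate $\eta_a(h)$, spectral projections onto the discrete invariant subspace to handle multiple eigenvalues, the Aubin--Nitsche duality step for the $\|\cdot\|_b$ estimate, and the Rayleigh-quotient identity $\bar{\lambda}_{i,h}-\lambda_i=\|u_i-\bar{u}_{i,h}\|_a^2-\lambda_i\|u_i-\bar{u}_{i,h}\|_b^2$ for the eigenvalue bound. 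The two difficulties you flag (clustered eigenvalues forcing the use of spectral projections rather than individual eigenvectors, and the failure of plain Galerkin orthogonality in the duality step) are exactly the points the cited Babu\v{s}ka--Osborn machinery resolves, so your plan is sound and follows essentially the same route the paper relies on.
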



\section{Full multigrid algorithm for eigenvalue problem}
Recently, a multilevel correction scheme is introduced
in \cite{LinXie_MultiLevel,Xie_IMA,Xie_JCP} for
solving eigenvalue problems. Based on the idea of multilevel correction scheme,
we propose a type of full multigrid method for
eigenvalue problems here. The main idea in this method is to approximate
 the underlying boundary value
problems on each level by some multigrid smoothing iteration steps.
In order to describe the full multigrid method, we first introduce
 the sequence of finite element
spaces. 
We generate a coarse mesh $\mathcal{T}_H$
with the mesh size $H$ and the coarse linear finite element space $V_H$ is
defined on the mesh $\mathcal{T}_H$. Then we define a sequence of
 triangulations $\mathcal{T}_{h_k}$
of $\Omega\subset \mathcal{R}^d$ determined as follows.
Suppose $\mathcal{T}_{h_1}$ (produced from $\mathcal{T}_H$ by
regular refinements) is given and let $\mathcal{T}_{h_k}$ be obtained
from $\mathcal{T}_{h_{k-1}}$ via one regular refinement step
(produce $\beta^d$ subelements) such that
\begin{eqnarray}\label{mesh_size_recur}
h_k=\frac{1}{\beta}h_{k-1},\ \ \ \ k=2,\cdots,n,
\end{eqnarray}
where the positive number $\beta$ denotes the refinement index
 and larger than $1$ (always equals $2$).
Based on this sequence of meshes, we construct the corresponding
 nested linear finite element spaces such that
\begin{eqnarray}\label{FEM_Space_Series}
V_{H}\subseteq V_{h_1}\subset V_{h_2}\subset\cdots\subset V_{h_n}.
\end{eqnarray}
The sequence of finite element spaces
$V_{h_1}\subset V_{h_2}\subset\cdots\subset V_{h_n}$
 and the finite element space $V_H$ have  the following relations
of approximation accuracy
\begin{eqnarray}\label{delta_recur_relation}
\eta_a(H)\gtrsim\delta_{h_1}(\lambda_i),\ \ \ \
\delta_{h_k}(\lambda_i)=\frac{1}{\beta}\delta_{h_{k-1}}(\lambda_i),\ \ \ k=2,\cdots,n.
\end{eqnarray}

\subsection{One correction step}
In order to design the full multigrid method, we introduce an one correction step in this subsection.

Assume we have obtained an eigenpair approximation
$(\lambda_{\ell,h_k},u_{\ell,h_k})\in \mathcal{R}\times V_{h_k}$.
Now we introduce a type of iteration step to improve the accuracy of the
current eigenpair approximation $(\lambda_{\ell,h_k},u_{\ell,h_k})$.

\begin{algorithm}\label{Multigrid_Smoothing_Step} One Correction Step
\begin{enumerate}
\item Define the following auxiliary source problem:
Find $\widehat{u}_{\ell+1,h_k}\in V_{h_k}$ such that
\begin{eqnarray}\label{aux_problem}
a(\widehat{u}_{\ell+1,h_k},v_{h_k})&=&\lambda_{\ell,h_k}
b(u_{\ell,h_k},v_{h_k}),\ \
\ \forall v_{h_k}\in V_{h_k}.
\end{eqnarray}
Perform $m$ multigrid iteration steps with the initial value $u_{\ell,h_k}$
to obtain a new eigenfunction approximation
$\widetilde{u}_{\ell+1,h_k}\in V_{h_k}$ by
\begin{eqnarray}\label{Multigrid_Iteration_Step}
\widetilde{u}_{\ell+1,h_k} = MG(V_{h_k},\lambda_{\ell,h_k}u_{\ell,h_k},u_{\ell,h_k},m),
\end{eqnarray}
where $V_{h_k}$ denotes the working space for the multigrid iteration,
$\lambda_{\ell,h_k}u_{\ell,h_k}$ is the right hand side term of the linear equation,
$u_{\ell,h_k}$ denotes  the initial guess and $m$ is the number of multigrid iteration times.
\item  Define a new finite element
space $V_{H,h_k}=V_H+{\rm span}\{\widetilde{u}_{\ell+1,h_k}\}$ and solve
the following eigenvalue problem:
Find $(\lambda_{\ell+1,h_k},u_{\ell+1,h_k})\in\mathcal{R}\times V_{H,h_k}$ such
that $b(u_{\ell+1,h_k},u_{\ell+1,h_k})=1$ and
\begin{eqnarray}\label{Eigen_Augment_Problem}
a(u_{\ell+1,h_k},v_{H,h_k})&=&\lambda_{\ell+1,h_k} b(u_{\ell+1,h_k},v_{H,h_k}),\ \ \
\forall v_{H,h_k}\in V_{H,h_k}.
\end{eqnarray}
\end{enumerate}
In order to simplify the notation and summarize the above two steps, we define
\begin{eqnarray*}
(\lambda_{\ell+1,h_k},u_{\ell+1,h_k})=EigenMG(V_H,\lambda_{\ell,h_k}, u_{\ell,h_k},V_{h_k},m).
\end{eqnarray*}
\end{algorithm}
\begin{theorem}\label{Error_Estimate_One_Smoothing_Theorem}
Assume the multigrid iteration
 $\widetilde{u}_{\ell+1,h_k} = MG(V_{h_k},\lambda_{\ell,h_k}u_{\ell,h_k},u_{\ell,h_k},m)$
  has the following error reduction rate
\begin{eqnarray}\label{MG_Theta}
\|\widehat{u}_{\ell+1,h_k}-\widetilde{u}_{\ell+1,h_k}\|_a
\leq\theta \|\widehat{u}_{\ell+1,h_k}-u_{\ell,h_k}\|_a,
\end{eqnarray}
and $(\lambda_{\ell,h_k},u_{\ell,h_k})$ has the following properties
\begin{eqnarray}
\|\bar{u}_{h_k}-u_{\ell,h_k}\|_b&\leq&
C_i\eta_a(H)\|\bar{u}_{h_k}-u_{\ell,h_k}\|_a,\label{Estimate_h_k_b}\\
|\bar{\lambda}_{h_k}-\lambda_{\ell,h_k}|&\leq&
C_i\|\bar{u}_{h_k}-u_{\ell,h_k}\|_a^2.\label{Estimate_h_k_Eigenvalue}
\end{eqnarray}
After performing the one correction step defined in
Algorithm \ref{Multigrid_Smoothing_Step},
the resultant eigenpair approximation $(\lambda_{\ell+1,h_k},u_{\ell+1,h_k})\in\mathcal{R}\times V_{h_k}$ has the
following error estimates
\begin{eqnarray}
\|\bar{u}_{h_k}-u_{\ell+1,h_k}\|_a &\leq &
\gamma \|\bar{u}_{h_k}-u_{\ell,h_k}\|_a,\label{Estimate_h_k_1_a}\\
\|\bar{u}_{h_k}-u_{\ell+1,h_k}\|_b&\leq&
C_i\eta_a(H)\|\bar{u}_{h_k}-u_{\ell+1,h_k}\|_a,\label{Estimate_h_k_1_b}\\
|\bar{\lambda}_{h_k}-\lambda_{\ell+1,h_k}|&\leq&
C_i\|\bar{u}_{h_k}-u_{\ell+1,h_k}\|_a^2.\label{Estimate_h_k_1_Eigenvalue}
\end{eqnarray}
where
\begin{eqnarray}\label{Gamma_Definition}
\gamma=\theta+(1+2\theta)C_i\eta_a(H)+(1+\theta)C_i^2\eta_a^2(H).
\end{eqnarray}
\end{theorem}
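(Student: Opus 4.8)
The plan is to regard Algorithm~\ref{Multigrid_Smoothing_Step} as a Rayleigh--Ritz approximation of the \emph{discrete} eigenvalue problem posed on $V_{h_k}$, carried out on the subspace $V_{H,h_k}\subset V_{h_k}$. Under this viewpoint the exact discrete pair $(\bar\lambda_{h_k},\bar u_{h_k})$ plays the role that $(\lambda_i,u_i)$ plays in Lemma~\ref{Err_Eigen_Global_Lem}, while $V_{H,h_k}$ plays the role of $V_h$; since $V_H\subseteq V_{H,h_k}$, the global approximation quantity of this inner problem is bounded by $\eta_a(H)$. Granting this, the inner analogue of (\ref{Err_Eigenfunction_Global_1_Norm}) yields
\begin{eqnarray*}
\|\bar u_{h_k}-u_{\ell+1,h_k}\|_a\le\big(1+C_i\eta_a(H)\big)\inf_{v\in V_{H,h_k}}\|\bar u_{h_k}-v\|_a\le\big(1+C_i\eta_a(H)\big)\|\bar u_{h_k}-\widetilde u_{\ell+1,h_k}\|_a,
\end{eqnarray*}
the last inequality using only $\widetilde u_{\ell+1,h_k}\in V_{H,h_k}$. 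This reduces (\ref{Estimate_h_k_1_a}) to estimating the error of the inexactly solved source step, after which (\ref{Estimate_h_k_1_b}) and (\ref{Estimate_h_k_1_Eigenvalue}) will follow from the inner analogues of (\ref{Err_Eigenfunction_Global_0_Norm}) and (\ref{Estimate_Eigenvalue}).

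For the source step I would introduce the discrete solution operator $T_{h_k}:L^2(\Omega)\to V_{h_k}$ defined, in parallel with (\ref{laplace_source_operator}), by $a(T_{h_k}f,v_{h_k})=b(f,v_{h_k})$ for all $v_{h_k}\in V_{h_k}$, so that the exact solution of (\ref{aux_problem}) is $\widehat u_{\ell+1,h_k}=\lambda_{\ell,h_k}T_{h_k}u_{\ell,h_k}$ and, because $\bar u_{h_k}=\bar\lambda_{h_k}T_{h_k}\bar u_{h_k}$,
\begin{eqnarray*}
\bar u_{h_k}-\widehat u_{\ell+1,h_k}=\bar\lambda_{h_k}T_{h_k}\big(\bar u_{h_k}-u_{\ell,h_k}\big)+\big(\bar\lambda_{h_k}-\lambda_{\ell,h_k}\big)T_{h_k}u_{\ell,h_k}.
\end{eqnarray*}
I would then invoke the stability bound $\|T_{h_k}g\|_a\le C\|g\|_b$ (obtained from $\|T_{h_k}g\|_a^2=b(g,T_{h_k}g)$). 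For the first term this turns the factor $\|\bar u_{h_k}-u_{\ell,h_k}\|_b$ into a gain of order $\eta_a(H)$ through hypothesis (\ref{Estimate_h_k_b}); the second term is governed by the eigenvalue perturbation (\ref{Estimate_h_k_Eigenvalue}) and is of higher order. This gives a clean source bound $\|\bar u_{h_k}-\widehat u_{\ell+1,h_k}\|_a\le C_i\eta_a(H)\|\bar u_{h_k}-u_{\ell,h_k}\|_a$. Passing from $\widehat u_{\ell+1,h_k}$ to the computed $\widetilde u_{\ell+1,h_k}$ by the multigrid contraction (\ref{MG_Theta}) and two triangle inequalities gives
\begin{eqnarray*}
\|\bar u_{h_k}-\widetilde u_{\ell+1,h_k}\|_a\le(1+\theta)\|\bar u_{h_k}-\widehat u_{\ell+1,h_k}\|_a+\theta\|\bar u_{h_k}-u_{\ell,h_k}\|_a\le\big[\theta+(1+\theta)C_i\eta_a(H)\big]\|\bar u_{h_k}-u_{\ell,h_k}\|_a.
\end{eqnarray*}
Multiplying by the Rayleigh--Ritz factor $1+C_i\eta_a(H)$ from the first paragraph and expanding reproduces exactly the constant $\gamma$ of (\ref{Gamma_Definition}), which proves (\ref{Estimate_h_k_1_a}).

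Finally, (\ref{Estimate_h_k_1_b}) and (\ref{Estimate_h_k_1_Eigenvalue}) are obtained by reapplying, to the inner problem on $V_{H,h_k}$, the duality (Aubin--Nitsche) argument behind (\ref{Err_Eigenfunction_Global_0_Norm}) and the Rayleigh-quotient identity behind (\ref{Estimate_Eigenvalue}); both reuse that $V_H\subseteq V_{H,h_k}$ so the relevant approximation factor is again $\eta_a(H)$. The main obstacle is the second (eigenvalue-perturbation) term in the source identity: a naive bound $\|T_{h_k}u_{\ell,h_k}\|_a\le C$ only yields a term of size $|\bar\lambda_{h_k}-\lambda_{\ell,h_k}|\le C_i\|\bar u_{h_k}-u_{\ell,h_k}\|_a^2$, which is not visibly of the order $C_i^2\eta_a^2(H)\|\bar u_{h_k}-u_{\ell,h_k}\|_a$ demanded by $\gamma$. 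The resolution I expect to need is that this term is essentially parallel to $\bar u_{h_k}$, so that after the $b$-normalized Rayleigh--Ritz step (which fixes $b(u_{\ell+1,h_k},u_{\ell+1,h_k})=1$) it affects the error orthogonal to $\bar u_{h_k}$ only at higher order; carefully justifying this, together with verifying the inner global-approximation bound $\inf_{v\in V_{H,h_k}}\|T_{h_k}f-v\|_a\lesssim\eta_a(H)\|f\|_b$, is where the real work lies.
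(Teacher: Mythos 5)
Your proposal is, in substance, the paper's own proof. The paper obtains the source-problem bound directly from the error identity $a(\bar{u}_{h_k}-\widehat{u}_{\ell+1,h_k},v_{h_k})=b(\bar{\lambda}_{h_k}\bar{u}_{h_k}-\lambda_{\ell,h_k}u_{\ell,h_k},v_{h_k})$ in (\ref{One_Correction_1})--(\ref{One_Correction_2}) --- your splitting via $T_{h_k}$ is the same computation written in operator form --- then combines (\ref{MG_Theta}) with two triangle inequalities to get $\|\bar{u}_{h_k}-\widetilde{u}_{\ell+1,h_k}\|_a\le\big(\theta+(1+\theta)C_i\eta_a(H)\big)\|\bar{u}_{h_k}-u_{\ell,h_k}\|_a$, and finally treats (\ref{Eigen_Augment_Problem}) exactly as you do: as a subspace (Rayleigh--Ritz) approximation of (\ref{Weak_Eigenvalue_Discrete}), applying the Babu\v{s}ka--Osborn estimates with $\widetilde{\eta}_a(H)\le\eta_a(H)$ (from $V_H\subseteq V_{H,h_k}$) to get the factor $1+C_i\eta_a(H)$ together with (\ref{Estimate_h_k_1_b}) and (\ref{Estimate_h_k_1_Eigenvalue}); the product of the two factors is precisely $\gamma$.

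On your flagged ``obstacle'': you have correctly identified the one loose step, but the intended resolution is much simpler than the parallel-component argument you sketch. In (\ref{One_Correction_2}) the paper asserts $|\bar{\lambda}_{h_k}-\lambda_{\ell,h_k}|+\|\bar{u}_{h_k}-u_{\ell,h_k}\|_b\le C_i\eta_a(H)\|\bar{u}_{h_k}-u_{\ell,h_k}\|_a$, which, given hypothesis (\ref{Estimate_h_k_Eigenvalue}), tacitly uses the smallness condition $\|\bar{u}_{h_k}-u_{\ell,h_k}\|_a\lesssim\eta_a(H)$. This is not among the theorem's stated hypotheses, but it holds wherever the theorem is invoked in Algorithm \ref{Full_Multigrid}: the initial guess satisfies $\|\bar{u}_{h_k}-u_{0,h_k}\|_a\le\|\bar{u}_{h_k}-\bar{u}_{h_{k-1}}\|_a+\|\bar{u}_{h_{k-1}}-u_{h_{k-1}}\|_a\lesssim\delta_{h_{k-1}}(\lambda)\lesssim\eta_a(H)$ by (\ref{delta_recur_relation}), and the contraction (\ref{Estimate_h_k_1_a}) propagates it through the subsequent correction steps. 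So the clean way to finish your write-up (and, strictly speaking, the paper's) is to add this smallness as an explicit hypothesis, after which the quadratic eigenvalue term is dominated by $C_i\eta_a(H)\|\bar{u}_{h_k}-u_{\ell,h_k}\|_a$ with no orthogonality considerations needed. One further small point: the paper sidesteps your ``inner global-approximation bound'' for $T_{h_k}$ by defining $\widetilde{\eta}_a(H)$ in (\ref{Eta_a_h_2}) through the continuous operator $T$; your $T_{h_k}$-version follows from it via Galerkin orthogonality (since $V_{H,h_k}\subset V_{h_k}$, one has $\|Tf-T_{h_k}f\|_a=\inf_{w\in V_{h_k}}\|Tf-w\|_a\le\inf_{v\in V_{H,h_k}}\|Tf-v\|_a$), at the cost of a harmless factor of $2$.
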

\begin{proof}
From (\ref{Weak_Eigenvalue_Discrete}) and (\ref{aux_problem}), we have
\begin{eqnarray}\label{One_Correction_1}
a(\bar{u}_{h_k}-\widehat{u}_{\ell+1,h_k},v_{h_k})
=b(\bar{\lambda}_{h_k}\bar{u}_{h_k}-\lambda_{\ell,h_k}u_{\ell,h_k},v_{h_k}),
\ \ \ \ \forall v_{h_k}\in V_{h_k}.
\end{eqnarray}
It leads to the following estimates
\begin{eqnarray}\label{One_Correction_2}
\|\bar{u}_{h_k}-\widehat{u}_{\ell+1,h_k}\|_a &\leq&
\|\bar{\lambda}_{h_k}\bar{u}_{h_k}-\lambda_{\ell,h_k}u_{\ell,h_k}\|_b\nonumber\\
&\leq& |\bar{\lambda}_{h_k}-\lambda_{\ell,h_k}|+\|\bar{u}_{h_k}-u_{\ell,h_k}\|_b\nonumber\\
&\leq& C_i\eta_a(H)\|\bar{u}_{h_k}-u_{\ell,h_k}\|_a.
\end{eqnarray}
Combining (\ref{MG_Theta}) and (\ref{One_Correction_2}) leads to the following
linear solving error estimate for $\widetilde{u}_{\ell+1,h_k}$
\begin{eqnarray}\label{One_Correction_3}
\|\widehat{u}_{\ell+1,h_k}-\widetilde{u}_{\ell+1,h_k}\|_a&\leq&
\theta \|\widehat{u}_{\ell+1,h_k}-u_{\ell,h_k}\|_a\nonumber\\
&\leq&\theta \big(\|\widehat{u}_{\ell+1,h_k}-\bar{u}_{h_k}\|_a
 +\|\bar{u}_{h_k}-u_{\ell,h_k}\|_a\big)\nonumber\\
&\leq& \theta\big(1+C_i\eta_a(H)\big)\|\bar{u}_{h_k}-u_{\ell,h_k}\|_a.
\end{eqnarray}
Then from (\ref{One_Correction_2}) and (\ref{One_Correction_3}), we have the following inequalities
\begin{eqnarray}\label{One_Correction_4}
\|\bar{u}_{h_k}-\widetilde{u}_{\ell+1,h_k}\|_a&\leq& \|\bar{u}_{h_k}-\widehat{u}_{\ell+1,h_k}\|_a
+\|\widehat{u}_{\ell+1,h_k}-\widetilde{u}_{\ell+1,h_k}\|_a\nonumber\\
&\leq& \big(\theta+(1+\theta)C_i\eta_a(H)\big)\|\bar{u}_{h_k}-u_{\ell,h_k}\|_a.
\end{eqnarray}

The eigenvalue problem (\ref{Eigen_Augment_Problem})
 can be regarded  as a finite dimensional subspace approximation of the eigenvalue problem
  (\ref{Weak_Eigenvalue_Discrete}).
Similarly to Lemma \ref{Err_Eigen_Global_Lem} (see \cite[Theorem 4.4]{BabuskaOsborn_1989}),
from the second step in Algorithm \ref{Multigrid_Smoothing_Step} and (\ref{One_Correction_4}),
the following estimates hold
\begin{eqnarray}\label{Error_u_u_h_2}
\|\bar{u}_{h_k}-u_{\ell+1, h_k}\|_a&\leq&
\big(1+C_i\widetilde{\eta}_a(H)\big)\inf_{v_{H,h_k}\in V_{H,h_k}}\|\bar{u}_{h_k}-v_{H,h_k}\|_a\nonumber\\
&\leq& \big(1+C_i\eta_a(H)\big)\|\bar{u}_{h_k}-\widetilde{u}_{\ell+1,h_k}\|_a\nonumber\\
&\leq&\gamma \|\bar{u}_{h_k}-u_{\ell,h_k}\|_a
\end{eqnarray}
and
\begin{eqnarray}\label{Error_u_u_h_2_Negative}
\|\bar{u}_{h_k}-u_{\ell+1,h_k}\|_b&\leq&
C_i\widetilde{\eta}_a(H)\|\bar{u}_{h_k}-u_{\ell+1,h_k}\|_a\nonumber\\
&\leq& C_i\eta_a(H)\|\bar{u}_{h_k}-u_{\ell+1,h_k}\|_a,\\
|\bar{\lambda}_{h_k}-\lambda_{\ell+1,h_k}|&\leq&
C_i\|\bar{u}_{h_k}-u_{\ell+1,h_k}\|_a^2,
\end{eqnarray}
where
\begin{eqnarray}\label{Eta_a_h_2}
\widetilde{\eta}_a(H)&=&\sup_{f\in V,\|f\|_0=1}\inf_{v\in
V_{H,h_k}}\|Tf-v\|_a \leq \eta_a(H).
\end{eqnarray}
Then we obtained the desired results (\ref{Estimate_h_k_1_a})-(\ref{Estimate_h_k_1_Eigenvalue})
and complete the proof.
\end{proof}

\subsection{Full multigrid method for eigenvalue problem}
In this subsection, we introduce a full multigrid
scheme based on the {\it One Correction Step} defined in Algorithm
\ref{Multigrid_Smoothing_Step}. This type of full multigrid method can obtain
the optimal error estimate with the optimal computational work.

Since the multigrid method for the boundary value problem has the uniform
error reduction rate, we can choose suitable $m$ such that $\theta<1$ in (\ref{MG_Theta}).
From (\ref{Gamma_Definition}), we have $\gamma<1$ if $H$ is small enough.
From this observation, we can build the following full multigrid method for solving
eigenvalue problems.
\begin{algorithm}\label{Full_Multigrid}Full Multigrid Scheme
\begin{enumerate}
\item Solve the following eigenvalue problem in $V_{h_1}$:
Find $(\lambda_{h_1}, u_{h_1})\in \mathcal{R}\times V_{h_1}$ such that
\begin{equation*}
a(u_{h_1}, v_{h_1}) = \lambda_{h_1} b(u_{h_1}, v_{h_1}), \quad \forall v_{h_1}\in  V_{h_1}.
\end{equation*}
Solve this eigenvalue problem to get an eigenpair approximation $(\lambda_{h_1},u_{h_1})\in\mathcal{R}\times V_{h_1}$.
\item For $k=2,\cdots,n$, do the following iteration
\begin{itemize}
\item Set $u_{0,h_k} = u_{h_{k-1}}$.
\item Do the following multigrid iteration
\begin{eqnarray*}
(\lambda_{\ell+1,h_k}, u_{\ell+1,h_k})=EigenMG(V_H,\lambda_{\ell,h_k},u_{\ell,h_k},V_{h_k},m),
 \ \ \ \ {\rm for}\ \ell=0,\cdots, p-1.
\end{eqnarray*}
\item set $\lambda_{h_k}=\lambda_{p,h_k}$ and $u_{h_k}=u_{p,h_k}$.
\end{itemize}
end Do
\end{enumerate}
Finally, we obtain an eigenpair approximation
$(\lambda_{h_n},u_{h_n})\in \mathcal{R}\times V_{h_n}$.
\end{algorithm}
\begin{theorem}
After implementing Algorithm \ref{Full_Multigrid}, the resultant
eigenpair approximation $(\lambda_{h_n},u_{h_n})$ has the following
error estimate
\begin{eqnarray}
\|\bar{u}_{h_n}-u_{h_n}\|_a &\leq&
C\frac{\gamma^p}{1-\beta\gamma^p}\delta_{h_n}(\lambda),\label{FM_Err_fun}\\
|\bar{\lambda}_{h_n}-\lambda_{h_n}|&\leq&C\delta_{h_n}^2(\lambda),\label{FM_Err_eigen}
\end{eqnarray}
under the condition $\beta\gamma^p<1$.
\end{theorem}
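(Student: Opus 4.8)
The plan is to argue by induction on the level index $k$, tracking the level-wise error $\varepsilon_k:=\|\bar{u}_{h_k}-u_{h_k}\|_a$ and showing that it obeys a linear recursion whose solution is a geometric series, summable precisely under the stated hypothesis $\beta\gamma^p<1$. For the base case I would note that Algorithm \ref{Full_Multigrid} solves the eigenproblem on $V_{h_1}$ exactly, so $u_{h_1}=\bar{u}_{h_1}$, $\varepsilon_1=0$, and the companion estimates hold trivially.

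For the inductive step at level $k$ the engine is Theorem \ref{Error_Estimate_One_Smoothing_Theorem}. Since the initial iterate is $u_{0,h_k}=u_{h_{k-1}}$, applying the one correction step $p$ times contracts the $a$-error by the factor $\gamma$ each time, giving $\varepsilon_k=\|\bar{u}_{h_k}-u_{p,h_k}\|_a\le\gamma^p\|\bar{u}_{h_k}-u_{h_{k-1}}\|_a$. I would then split the right-hand side by the triangle inequality into $\|\bar{u}_{h_k}-\bar{u}_{h_{k-1}}\|_a+\varepsilon_{k-1}$, and bound the first term using Lemma \ref{Err_Eigen_Global_Lem} (each discrete eigenfunction lies within $C\delta_{h}(\lambda)$ of the exact $u_i$ in $\|\cdot\|_a$) together with the refinement relation (\ref{delta_recur_relation}), so that $\|\bar{u}_{h_k}-\bar{u}_{h_{k-1}}\|_a\le C(\delta_{h_k}+\delta_{h_{k-1}})\le C(1+\beta)\delta_{h_k}$. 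This produces the recursion $\varepsilon_k\le\gamma^p\varepsilon_{k-1}+C(1+\beta)\gamma^p\delta_{h_k}$.

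Unrolling from $\varepsilon_1=0$ and substituting $\delta_{h_k}=\beta^{n-k}\delta_{h_n}$ (again from (\ref{delta_recur_relation})) gives $\varepsilon_n\le C(1+\beta)\gamma^p\delta_{h_n}\sum_{j=0}^{n-2}(\beta\gamma^p)^j$. The condition $\beta\gamma^p<1$ makes this geometric sum bounded by $1/(1-\beta\gamma^p)$, which, after absorbing $(1+\beta)$ into $C$, yields the eigenfunction estimate (\ref{FM_Err_fun}). The eigenvalue bound (\ref{FM_Err_eigen}) then follows from the quadratic estimate (\ref{Estimate_h_k_1_Eigenvalue}) applied at the final level, because $\gamma^p/(1-\beta\gamma^p)$ is a fixed constant whose square is absorbed into $C$.

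The main obstacle is not the summation but the \emph{verification of the hypotheses} (\ref{Estimate_h_k_b})--(\ref{Estimate_h_k_Eigenvalue}) of Theorem \ref{Error_Estimate_One_Smoothing_Theorem} for the transferred initial guess $u_{0,h_k}=u_{h_{k-1}}$: the companion estimates must hold relative to the \emph{current} level's exact eigenfunction $\bar{u}_{h_k}$, whereas the induction only supplies them relative to $\bar{u}_{h_{k-1}}$. The subtle point is that the weak-norm domination $\|\bar{u}_{h_k}-u_{h_{k-1}}\|_b\le C\eta_a(H)\|\bar{u}_{h_k}-u_{h_{k-1}}\|_a$ cannot hold for an arbitrary pair of functions in $V_{h_k}$, where the two norms are merely Poincar\'e-equivalent; it is a genuine Aubin--Nitsche duality (superconvergence) statement. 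I would resolve it by observing that $u_{h_{k-1}}$ is the Ritz eigenfunction in the subspace $V_{H,h_{k-1}}\subseteq V_{h_k}$, which contains $V_H$; hence $\bar{u}_{h_k}-u_{h_{k-1}}$ is the Galerkin error of approximating the $h_k$-eigenproblem in $V_{H,h_{k-1}}$, and applying the Babuska--Osborn theory underlying Lemma \ref{Err_Eigen_Global_Lem} to this approximation — exactly as in (\ref{Error_u_u_h_2_Negative}) with the quantity $\widetilde{\eta}_a(H)\le\eta_a(H)$ — delivers both companion estimates with the small factor $\eta_a(H)$, and the eigenvalue companion estimate likewise. Once the hypotheses are in force for $\ell=0$, Theorem \ref{Error_Estimate_One_Smoothing_Theorem} propagates them through all $p$ corrections on level $k$, closing the induction.
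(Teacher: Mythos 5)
Your proposal is correct and follows essentially the same route as the paper: the identical recursion $\|e_k\|_a\le\gamma^p\big(C\delta_{h_k}(\lambda)+\|e_{k-1}\|_a\big)$ obtained from the $p$-fold contraction of Theorem \ref{Error_Estimate_One_Smoothing_Theorem}, the same triangle-inequality split with $\|\bar{u}_{h_k}-\bar{u}_{h_{k-1}}\|_a\le C\delta_{h_k}(\lambda)$ via Lemma \ref{Err_Eigen_Global_Lem} and (\ref{delta_recur_relation}), the same geometric summation from $e_1=0$ under $\beta\gamma^p<1$, and the quadratic eigenvalue bound at the final level. The one place you go beyond the paper is worth noting: the paper applies the contraction at each new level without verifying that the transferred iterate $u_{0,h_k}=u_{h_{k-1}}$ satisfies the hypotheses (\ref{Estimate_h_k_b})--(\ref{Estimate_h_k_Eigenvalue}) \emph{relative to} $\bar{u}_{h_k}$, and your observation that the $\|\cdot\|_b$-domination is a genuine duality statement, not a norm equivalence, is exactly right. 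Your repair — viewing $u_{h_{k-1}}$ as the Ritz eigenfunction in $V_{H,h_{k-1}}\subseteq V_{h_k}$ and invoking the Babu\v{s}ka--Osborn subspace estimates with $\widetilde{\eta}_a(H)\le\eta_a(H)$, just as in (\ref{Error_u_u_h_2_Negative}) — is the correct way to close this gap, and it mirrors the mechanism already used inside the proof of Theorem \ref{Error_Estimate_One_Smoothing_Theorem}, so your write-up is in fact slightly more complete than the paper's own proof.
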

\begin{proof}
Define $e_{k}:=\bar{u}_{h_k}-u_{h_k}$. Then from step 1
in Algorithm \ref{Full_Multigrid}, it is obvious $e_1=0$. For $k=2,\cdots,n$, we have
\begin{eqnarray}\label{FM_Estimate_1}
\|e_k\|_a&\leq& \gamma^p\|\bar{u}_{h_k}-u_{h_{k-1}}\|_a\nonumber\\
&\leq& \gamma^p\big(\|\bar{u}_{h_k}-\bar{u}_{h_{k-1}}\|_a
+\|\bar{u}_{h_{k-1}}-u_{h_{k-1}}\|_a\big)\nonumber\\
&\leq& \gamma^p \big(C\delta_{h_k}(\lambda)+\|e_{k-1}\|_a\big).
\end{eqnarray}
By iterating inequality (\ref{FM_Estimate_1}) and $\beta\gamma^p<1$, the following
inequalities hold
\begin{eqnarray}
\|e_n\|_a&\leq& C\gamma^p\delta_{h_n}(\lambda)+C\gamma^{2p}\delta_{h_{n-1}}(\lambda)+\cdots
+C\gamma^{(n-1)p}\delta_{h_2}(\lambda)\nonumber\\
&\leq& C\sum_{k=2}^n \gamma^{(n-k+1)p}\delta_{h_{\ell}}(\lambda)
=C\left(\sum_{k=2}^n \big(\beta\gamma^{p}\big)^{n-k}\right)\gamma^{p}\delta_{h_n}(\lambda)\nonumber\\
&\leq&C\frac{\gamma^p}{1-\beta\gamma^p}\delta_{h_n}(\lambda).
\end{eqnarray}
For such choice of $p$, we arrive the desired result (\ref{FM_Err_fun}) and
(\ref{FM_Err_eigen}) can be obtained by (\ref{Estimate_Eigenvalue}) and (\ref{FM_Err_fun}).
\end{proof}
\begin{remark}
The good convergence rate of the multigrid method for boundary value problems leads to that
we do not need to choose large $m$ and $p$ \cite{BrennerScott,Hackbusch_Book,Shaidurov,Xu}.
\end{remark}
Now we turn our attention to the estimate of computational work
for {\it Full Multigrid Scheme \ref{Full_Multigrid}}. We will show that Algorithm \ref{Full_Multigrid}
makes solving eigenvalue problem need almost the same work as solving the corresponding
boundary value problem.

First, we define the dimension of each level
finite element space as $N_k:={\rm dim}V_{h_k}$. Then we have
\begin{eqnarray}\label{relation_dimension}
N_k=\Big(\frac{1}{\beta}\Big)^{d(n-k)}N_n,\ \ \ k=1,2,\cdots, n.
\end{eqnarray}
\begin{theorem}
Assume the eigenvalue problem solved in the coarse spaces $V_{H}$ and $V_{h_1}$ need work
$\mathcal{O}(M_H)$ and $\mathcal{O}(M_{h_1})$, respectively, and the work of the
multigrid solver $MG(V_{h_k},\lambda_{\ell,h_k}u_{\ell,h_k},u_{\ell,h_k},m)$
 in each level space $V_{h_k}$ is $\mathcal{O}(N_k)$ for $k=2,3,\cdots,n$. Then the work involved in the
Full Multigrid Scheme \ref{Full_Multigrid} is  $\mathcal{O}(N_n+M_H\log(N_n)+M_{h_1})$.
Furthermore, the complexity will be $\mathcal{O}(N_n)$ provided $M_H\ll N_k$ and $M_{h_1}\leq N_k$.
\end{theorem}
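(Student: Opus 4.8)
The plan is to account for the cost of Algorithm \ref{Full_Multigrid} by splitting it over the finite element levels and, within each level, into the multigrid smoothing cost and the coarse eigenvalue-solve cost. First I would inspect one call of $EigenMG(V_H,\lambda_{\ell,h_k},u_{\ell,h_k},V_{h_k},m)$. By Algorithm \ref{Multigrid_Smoothing_Step} this consists of the $m$ multigrid iterations $MG(V_{h_k},\lambda_{\ell,h_k}u_{\ell,h_k},u_{\ell,h_k},m)$, which cost $\mathcal{O}(N_k)$ by hypothesis, together with the solution of the eigenvalue problem (\ref{Eigen_Augment_Problem}) posed on $V_{H,h_k}=V_H+\mathrm{span}\{\widetilde{u}_{\ell+1,h_k}\}$. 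This space has dimension $\dim V_H+1$, so assembling the few matrix entries that involve $\widetilde{u}_{\ell+1,h_k}\in V_{h_k}$ costs at most $\mathcal{O}(N_k)$ (one mat-vec plus a restriction to $V_H$ and one inner product), and solving the resulting $(\dim V_H+1)$-dimensional eigenproblem costs $\mathcal{O}(M_H)$. Hence each $EigenMG$ call on level $k$ costs $\mathcal{O}(N_k+M_H)$; since the inner loop runs $p$ times with $p$ a fixed constant, the work on level $k$ is $\mathcal{O}(N_k+M_H)$ as well.

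Next I would sum over the levels and add the initial eigenvalue solve on $V_{h_1}$, which costs $\mathcal{O}(M_{h_1})$, to obtain the total work
\begin{eqnarray*}
W&=&\mathcal{O}(M_{h_1})+\sum_{k=2}^{n}\mathcal{O}(N_k+M_H)\\
&=&\mathcal{O}\Big(M_{h_1}+\sum_{k=2}^{n}N_k+(n-1)M_H\Big).
\end{eqnarray*}
The two sums are estimated separately. For the fine work I would substitute the dimension relation (\ref{relation_dimension}) and recognize a convergent geometric series:
\begin{eqnarray*}
\sum_{k=2}^{n}N_k&=&\sum_{k=2}^{n}\Big(\frac{1}{\beta}\Big)^{d(n-k)}N_n
=N_n\sum_{j=0}^{n-2}\beta^{-dj}\leq\frac{N_n}{1-\beta^{-d}}=\mathcal{O}(N_n),
\end{eqnarray*}
where convergence uses $\beta>1$ and $d\geq 2$. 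For the coarse work the key point is that the number of levels is only logarithmic in $N_n$: inverting (\ref{relation_dimension}) gives $N_n=\beta^{d(n-1)}N_1$, hence $n-1=(\log N_n-\log N_1)/(d\log\beta)=\mathcal{O}(\log N_n)$, so $(n-1)M_H=\mathcal{O}(M_H\log(N_n))$.

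Combining these bounds yields $W=\mathcal{O}(N_n+M_H\log(N_n)+M_{h_1})$, which is the first assertion. For the sharpened estimate I would read the hypothesis $M_H\ll N_k$ in the effective sense that the coarsest space is taken coarse enough that $M_H\log(N_n)=\mathcal{O}(N_n)$, while $M_{h_1}\leq N_k$ gives $M_{h_1}\leq N_n$; both extra terms are then absorbed into $\mathcal{O}(N_n)$. The main obstacle is not the routine geometric summation but the correct bookkeeping of the coarse solve: it recurs at every level and every correction step, and only the logarithmic bound $n=\mathcal{O}(\log N_n)$ keeps its cumulative contribution under control. This is exactly why the term $M_H\log(N_n)$, rather than a single $M_H$, appears in the complexity, and why the condition on $M_H$ is needed to recover the optimal order $\mathcal{O}(N_n)$.
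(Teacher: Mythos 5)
Your proposal is correct and follows essentially the same route as the paper: the per-level cost bound $W_k=\mathcal{O}(N_k+M_H)$, the geometric-series summation of $\sum_{k=2}^n\beta^{-d(n-k)}N_n=\mathcal{O}(N_n)$ via (\ref{relation_dimension}), and the bound $n-1=\mathcal{O}(\log N_n)$ to control the cumulative coarse-solve cost are exactly the steps in the paper's proof. Your version is in fact slightly more careful than the paper's, since you justify explicitly why one $EigenMG$ call costs $\mathcal{O}(N_k+M_H)$ (assembly of the $(\dim V_H+1)$-dimensional augmented eigenproblem in $\mathcal{O}(N_k)$ operations, with $p$ a fixed constant), a point the paper asserts without detail.
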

\begin{proof}
Let $W_{k}$ denote the work in the correction step in the $k$-th finite element space $V_{h_{k}}$.
Then with the correction definition in Algorithm \ref{Multigrid_Smoothing_Step}, we have
\begin{eqnarray}\label{work_k}
W_{k}&=&\mathcal{O}(N_{k}+M_H), \ \ \ \  \ k=2,\cdots, n.
\end{eqnarray}
Iterating (\ref{work_k}) and using the fact (\ref{relation_dimension}), we obtain
\begin{eqnarray*}
{\rm Total\ work}&=&\sum_{k=1}^nW_k=\mathcal{O}\Big(M_{h_1}+\sum_{k=2}^n\big(N_{k}+M_H\big)\Big)\nonumber\\
&=&\mathcal{O}\Big(M_{h_1}+(n-1)M_H+\sum_{k=2}^n\big(\frac{1}{\beta}\big)^{d(n-k)}N_n\Big)\nonumber\\
&=&\mathcal{O}(N_n+M_H\log N_n+M_{h_1}).
\end{eqnarray*}
This is the desired result $\mathcal{O}(N_n+M_H\log N_n+M_{h_1})$ and the
one $\mathcal{O}(N_n)$  can be obtained by the conditions $M_H\ll N_n$ and $M_{h_1}\leq N_n$.
\end{proof}

\section{Numerical results}
In this section, two numerical examples are presented to illustrate the
efficiency of the full multigrid scheme proposed in this
paper.

\subsection{Model eigenvalue problem}
Here we give the numerical results of the full multigrid
scheme for the model eigenvalue problem:
Find $(\lambda,u)$ such that
\begin{equation}\label{problem}
\left\{
\begin{array}{rcl}
-\Delta u&=&\lambda u,\quad{\rm in}\ \Omega,\\
u&=&0,\quad\ \  {\rm on}\ \partial\Omega,\\
\int_{\Omega} u^2d\Omega&=&1,
\end{array}
\right.
\end{equation}
where $\Omega=(0,1)\times(0,1)$.

The sequence of finite element spaces are constructed by
using linear element on the series of meshes which are produced by
regular refinement with $\beta =2$ (connecting the midpoints of each edge).
In this example, we use two meshes which are generated by Delaunay method as the initial mesh
$\mathcal{T}_{h_1}$ and set $\mathcal{T}_H=\mathcal{T}_{h_1}$
to investigate the convergence behaviors.
Figure \ref{Initial_Mesh} shows the corresponding
initial meshes: one is coarse and the other is fine.

Algorithm \ref{Full_Multigrid} is applied to solve the eigenvalue problem. In this subsection, we choose
$m=2$ and $2$ conjugate gradient smoothing steps for the presmoothing and postsmoothing in each
multigrid iteration step in Algorithm \ref{Multigrid_Smoothing_Step}.  In each level of the
full multigrid scheme defined in Algorithm \ref{Full_Multigrid}, we only do $2$ multigrid
iteration steps ($p=2$) defined in Algorithm \ref{Multigrid_Smoothing_Step}. 
For comparison, we also solve the eigenvalue problem
by the direct method.
\begin{figure}[ht]
\centering
\includegraphics[width=5.5cm,height=5.5cm]{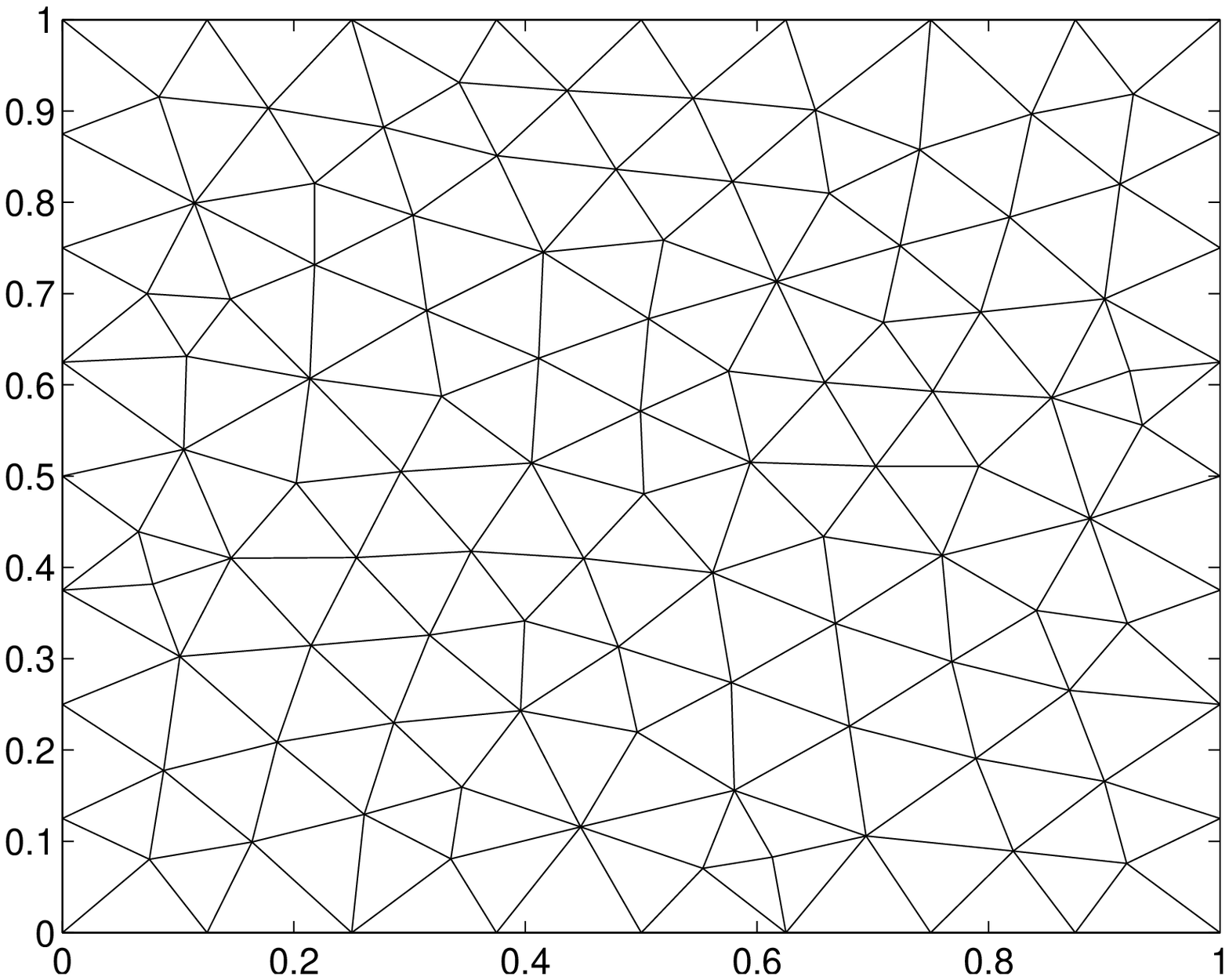}
\includegraphics[width=5.5cm,height=5.5cm]{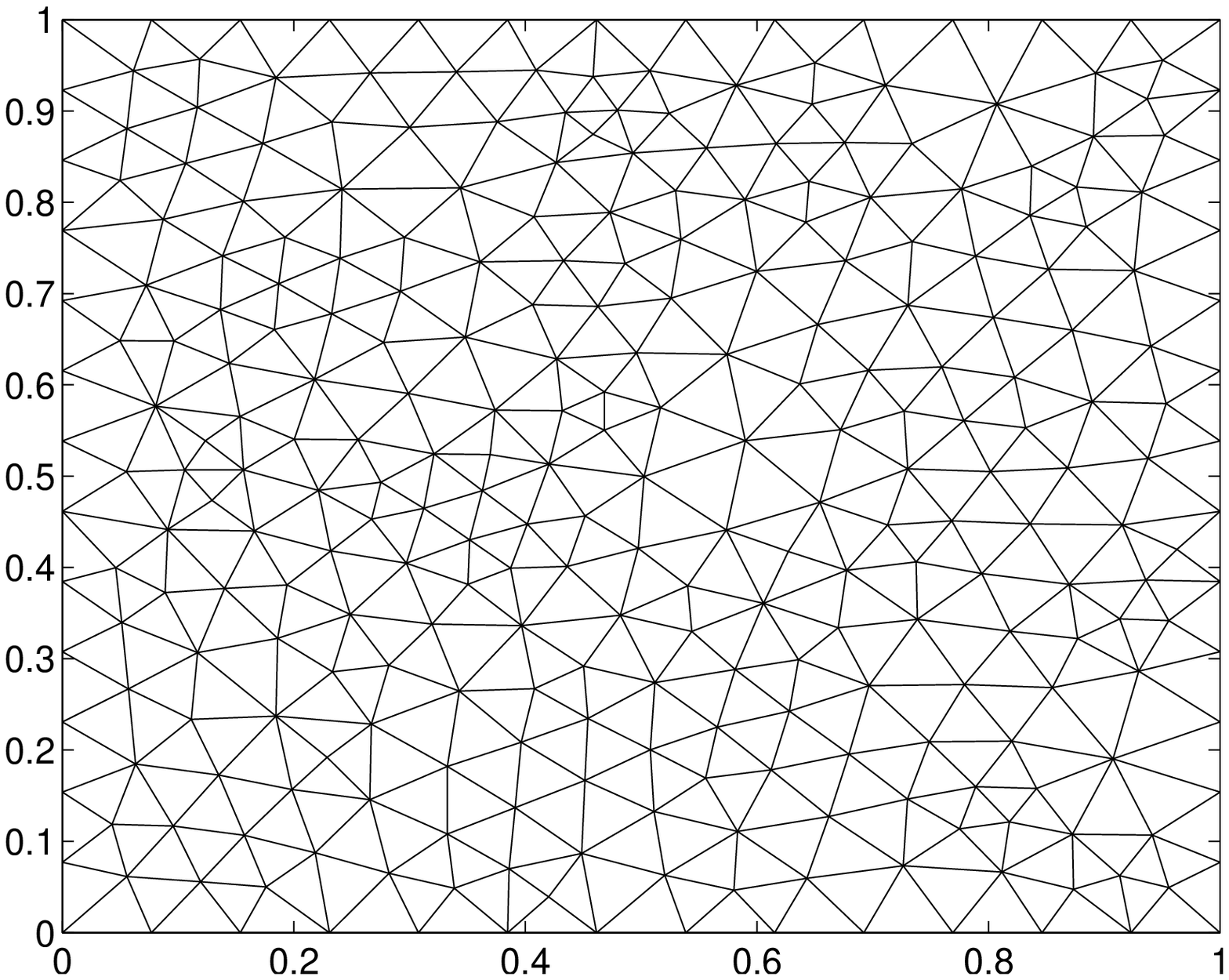}
\caption{\small\texttt The coarse and fine initial meshes
for the unit square} \label{Initial_Mesh}
\end{figure}
Figure \ref{numerical_multi_grid_Model}
gives the corresponding numerical results for the first eigenvalue
$\lambda_1=2\pi^2$ and the corresponding eigenfunction on the two
initial meshes illustrated in Figure \ref{Initial_Mesh}.
\begin{figure}[ht]
\centering
\includegraphics[width=7cm,height=6cm]{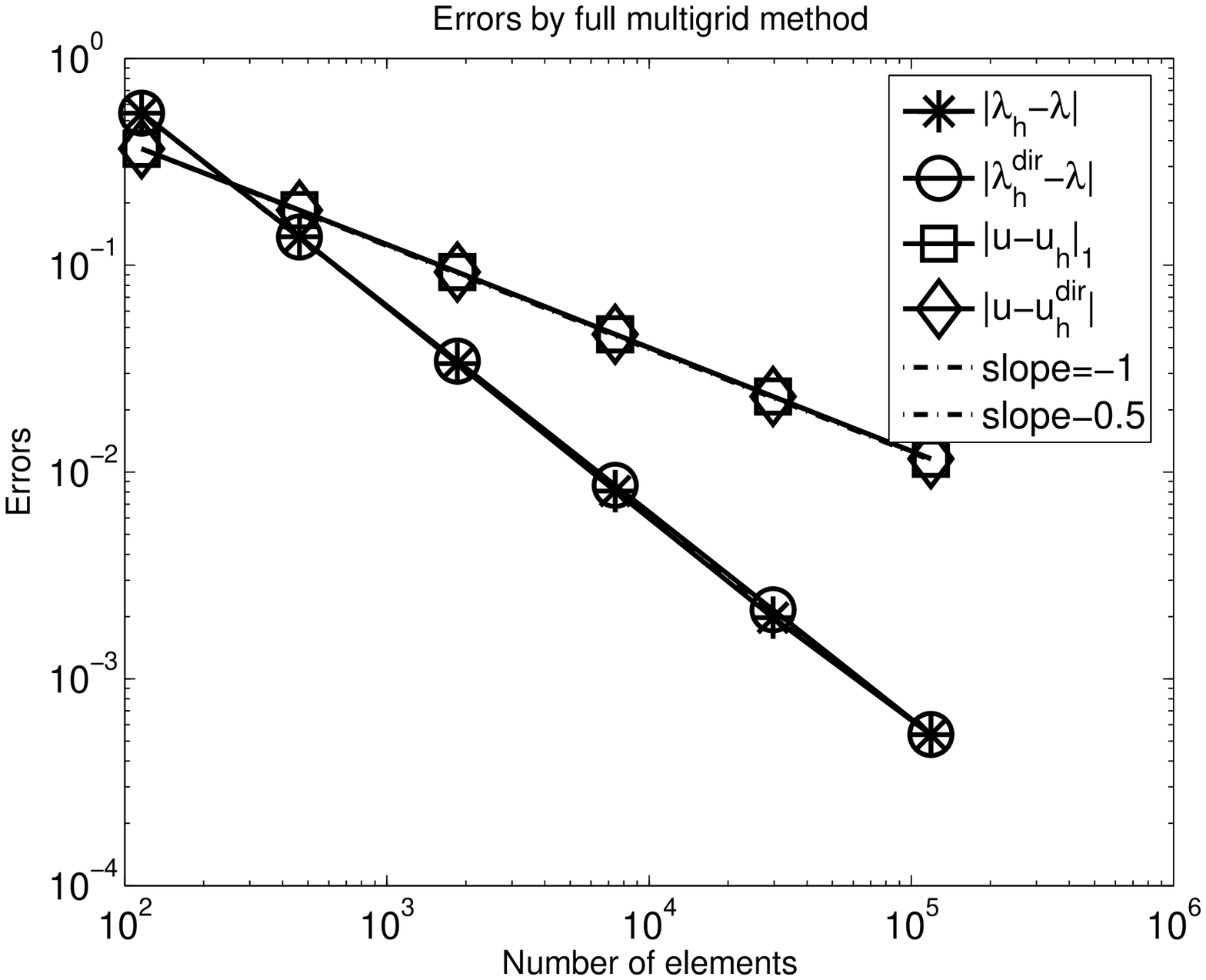}
\includegraphics[width=7cm,height=6cm]{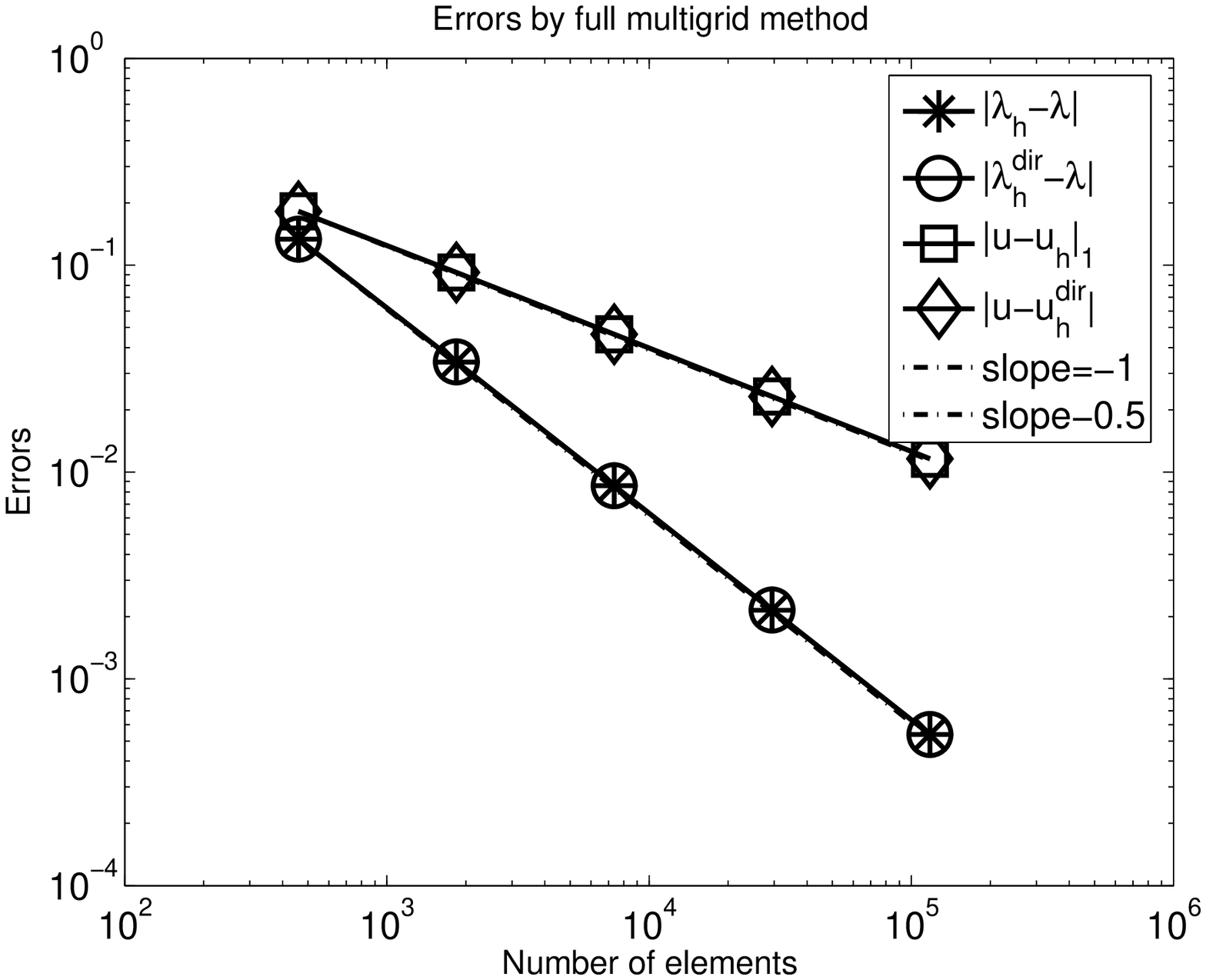}
\caption{\small\texttt The errors of the full multigrid
algorithm for the first six eigenvalues on the unit square, where $u_h$
 and $\lambda_h$ denote the eigenfunction and eigenvalue approximations 
 by Algorithm \ref{Full_Multigrid}, and $u_h^{\rm dir}$ and
 $\lambda_h^{\rm dir}$ denote the eigenfunction
 and eigenvalue approximation by direct eigenvalue solving
 (The left figure corresponds to the left mesh in Figure \ref{Initial_Mesh} and
the right figure corresponds to the right mesh in Figure \ref{Initial_Mesh}) }
\label{numerical_multi_grid_Model}
\end{figure}

From Figure \ref{numerical_multi_grid_Model}, we find the full multigrid scheme can obtain
the optimal error estimates as same as the direct eigenvalue problem
solving for the eigenvalue and the corresponding eigenfunction approximations.

We also check the convergence behavior for multi eigenvalue approximations with
Algorithm \ref{Full_Multigrid}. Here the first six eigenvalues
$\lambda=2\pi^2, 5\pi^2, 5\pi^2, 8\pi^2,$ $10\pi^2, 10\pi^2$
are investigated. We adopt the meshes in Figure \ref{Initial_Mesh} as
the initial meshes and the corresponding numerical results are shown in Figure
\ref{numerical_multi_grid_Model_Multi} which also exhibits the optimal convergence
of the full multigrid scheme.
\begin{figure}[ht]
\centering
\includegraphics[width=7cm,height=6cm]{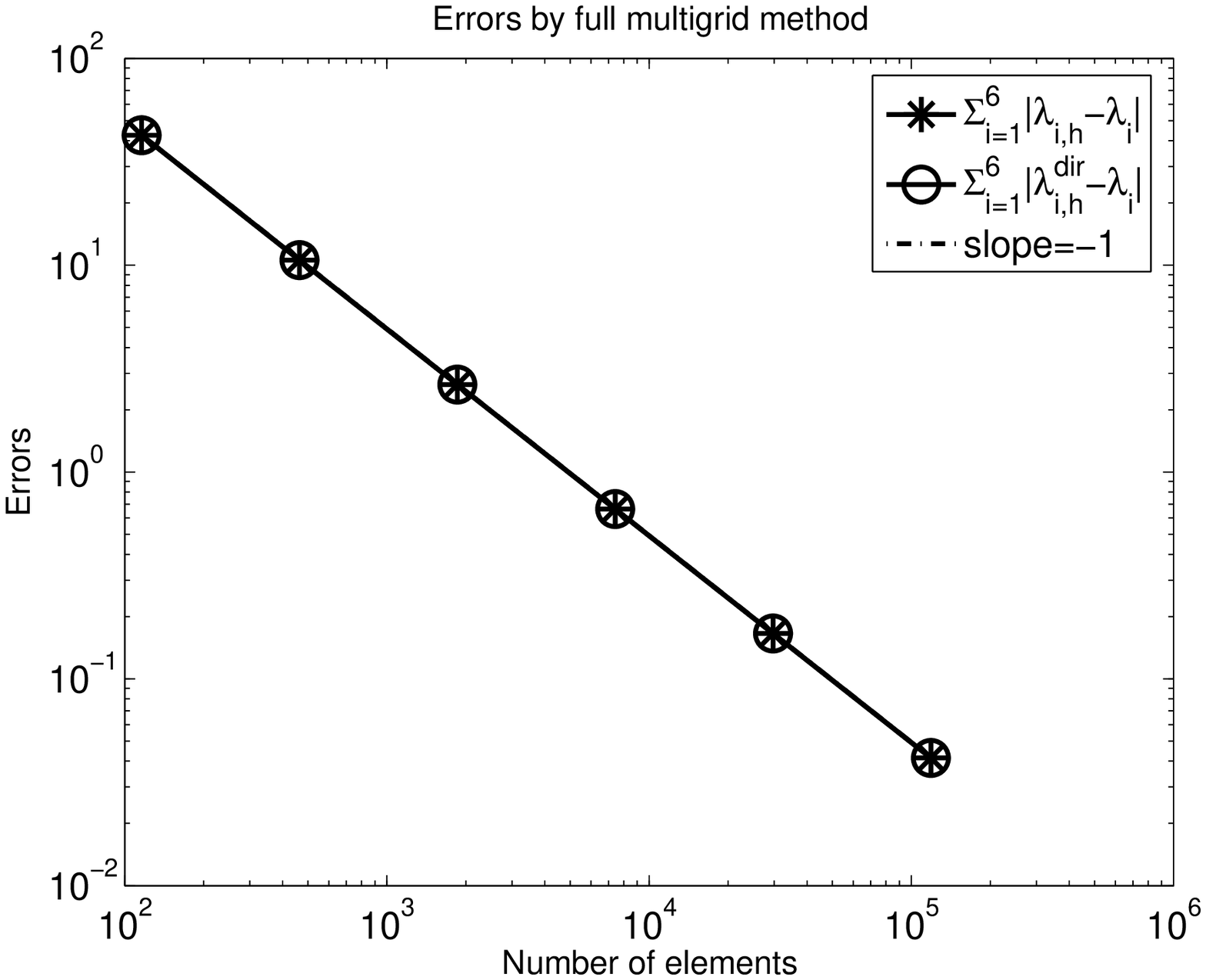}
\includegraphics[width=7cm,height=6cm]{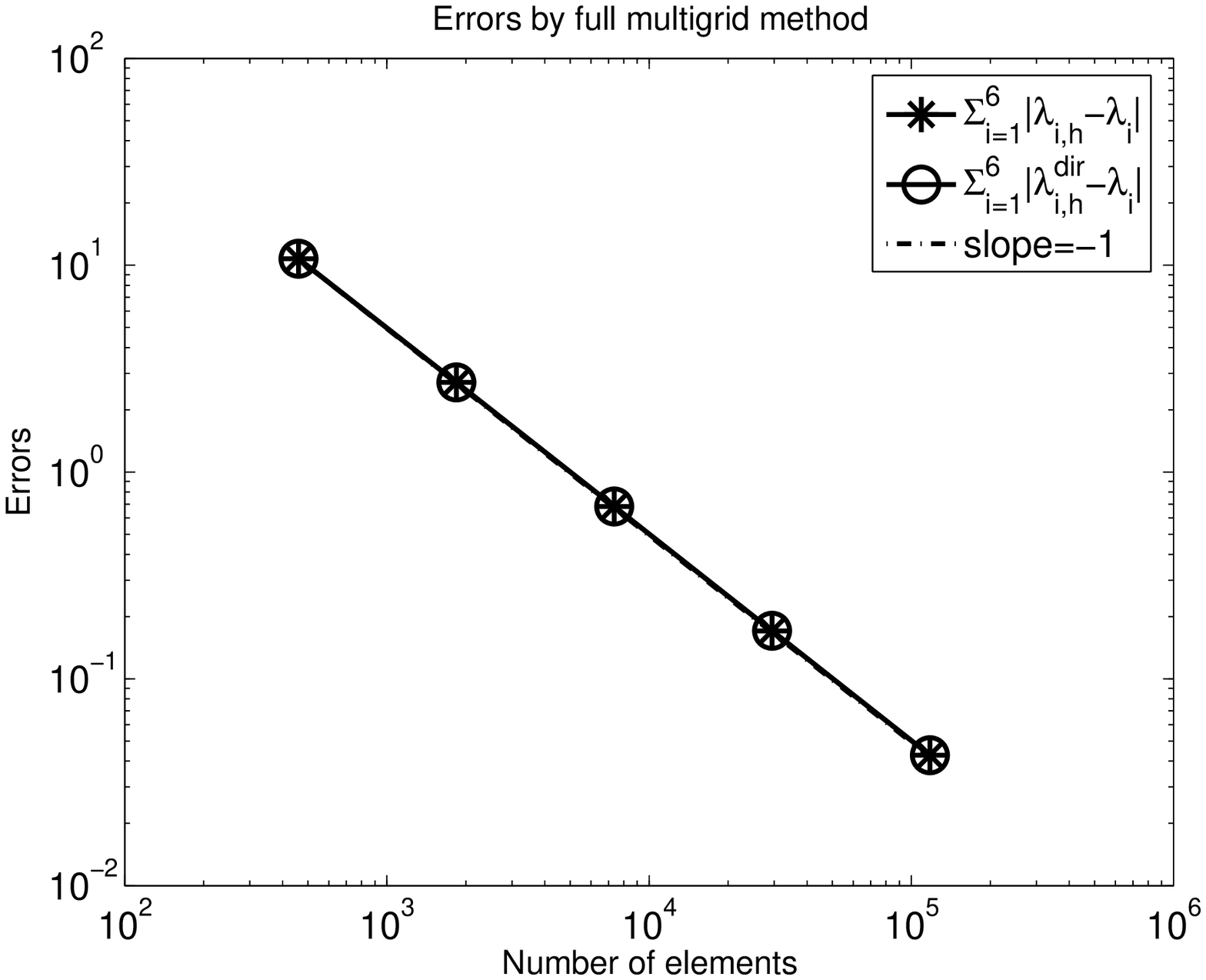}
\caption{\small\texttt The errors of the full multigrid
algorithm for the first six eigenvalues on the unit square,
 $\lambda_h$ denotes the eigenvalue approximation
 by Algorithm \ref{Full_Multigrid},
 $\lambda_h^{\rm dir}$ denotes the eigenvalue approximation by direct eigenvalue solving
 (The left figure corresponds to the left mesh in Figure \ref{Initial_Mesh} and
the right figure corresponds to the right mesh in Figure \ref{Initial_Mesh}) }
 \label{numerical_multi_grid_Model_Multi}
\end{figure}

\subsection{More general eigenvalue problem}
Here we give numerical results of the full multigrid method
for solving a more general eigenvalue problem on the unit square
domain $\Omega=(0, 1)\times (0, 1)$: Find $(\lambda,u)$ such that
\begin{equation}\label{Example_2}
\left\{
\begin{array}{rcl}
-\nabla\cdot\mathcal{A}\nabla u+\phi u&=&\lambda\rho u,\quad{\rm in}\ \Omega,\\
u&=&0,\quad\ \ \ {\rm on}\ \partial\Omega,\\
\int_{\Omega}\rho u^2d\Omega&=&1,
\end{array}
\right.
\end{equation}
where
\begin{equation*}
\mathcal{A}=\left (
\begin{array}{cc}
$$1+(x_1-\frac{1}{2})^2$$&$$(x_1-\frac{1}{2})(x_2-\frac{1}{2})$$\\
$$(x_1-\frac{1}{2})(x_2-\frac{1}{2})$$&$$1+(x_2-\frac{1}{2})^2$$
\end{array}
\right),
\end{equation*}
$\phi=e^{(x_1-\frac{1}{2})(x_2-\frac{1}{2})}$ and
$\rho=1+(x_1-\frac{1}{2})(x_2-\frac{1}{2})$.

In this example, we also use two coarse meshes which are shown in Figure \ref{Initial_Mesh}
as the initial meshes to investigate the convergence behaviors.
Since the exact solution is not known, we choose an adequately accurate eigenvalue
approximations with the extrapolation method (see, e.g., \cite{LinLin}) as the exact
eigenvalues to measure errors.
Figure \ref{numerical_multi_grid_General_Multi} gives the corresponding
numerical results for the first six eigenvalue approximations. In this example, we also choose
$m=2$, $p=2$ and $2$ conjugate gradient smoothing step in the presmoothing and postsmoothing procedure.
Here we also compare the numerical results with the direct algorithm. The corresponding results 
are shown in Figure \ref{numerical_multi_grid_General_Multi} which
  also exhibits the optimality of the error and complexity for
Algorithm \ref{Full_Multigrid}.
\begin{figure}[ht]
\centering
\includegraphics[width=7cm,height=6cm]{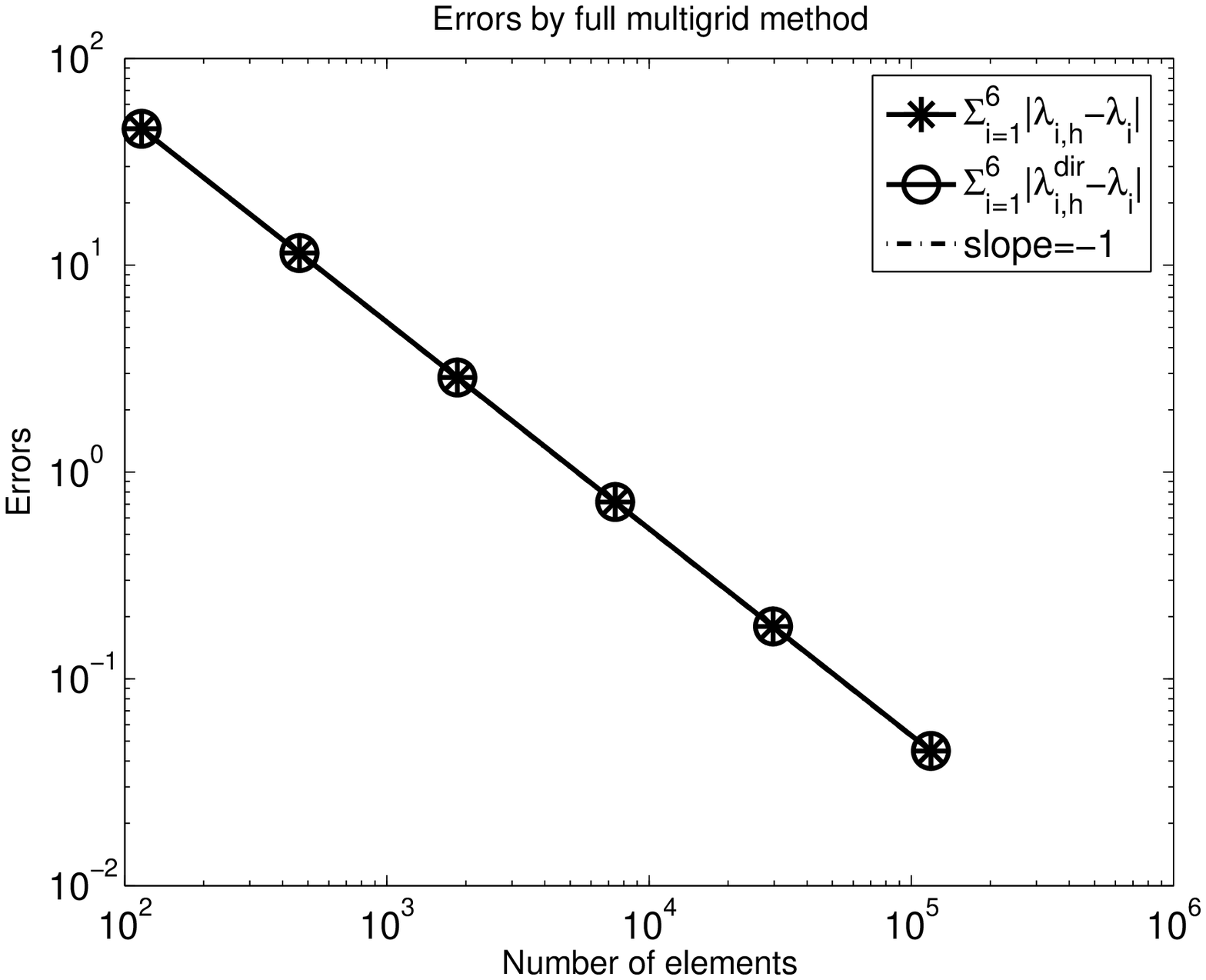}
\includegraphics[width=7cm,height=6cm]{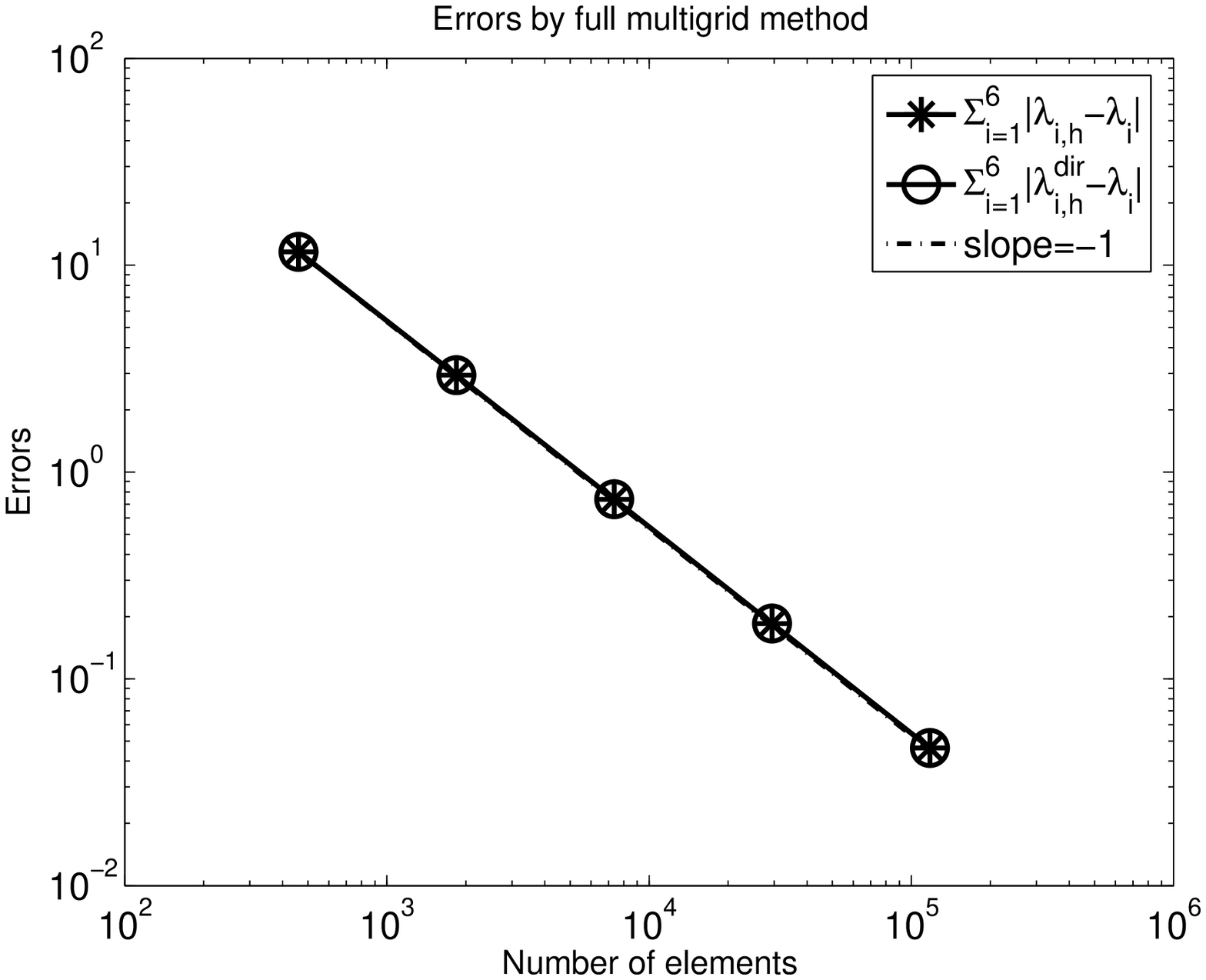}
\caption{\small\texttt The errors of the full multigrid
algorithm for the first six eigenvalues on the unit square,
 $\lambda_h$ denotes the eigenvalue approximation
 by Algorithm \ref{Full_Multigrid},
 $\lambda_h^{\rm dir}$ denotes the eigenvalue approximation by direct eigenvalue solving
 (The left figure corresponds to the left mesh in Figure \ref{Initial_Mesh} and
the right figure corresponds to the right mesh in Figure \ref{Initial_Mesh}) }
 \label{numerical_multi_grid_General_Multi}
\end{figure}

\section{Concluding remarks}
In this paper, we give a full multigrid scheme
to solve eigenvalue problems. The idea here is to use the multilevel correction method
to transform the solution of the eigenvalue problem to a series of solutions of the corresponding
boundary value problems, which can be solved by some multigrid iteration steps, and solutions of
eigenvalue problems defined on the coarsest finite element space.

We can replace the multigrid iteration by other types of efficient iteration schemes
such as algebraic multigrid method, the type of preconditioned schemes based on
the subspace decomposition and subspace corrections (see, e.g., \cite{BrennerScott, Xu}),
and the domain decomposition method (see, e.g., \cite{ToselliWidlund,XuZhou_Eigen_LocalParallel}).
The ideas can be extended to other types of linear and nonlinear eigenvalue
problems and other types problems. These will be investigated in  our future work.

\end{document}